\newtheorem{definition}{Definition}[section]
\newtheorem{problem}{Problem}[section]
\newtheorem{theorem}{Theorem}[section]
\newtheorem{remark}{Remark}[section]
\begin{document}

\title{Uniqueness in phaseless inverse scattering problems with
superposition of incident point sources}

\author{
Fenglin Sun\thanks{School of Mathematics, Jilin University, Changchun, P. R. China {\it sunfl18@mails.jlu.edu.cn}}, 
Deyue Zhang\thanks{School of Mathematics, Jilin University, Changchun, P. R. China {\it dyzhang@jlu.edu.cn} }\ \ and 
Yukun Guo\thanks{Department of Mathematics, Harbin Institute of Technology, Harbin, P. R. China. {\it ykguo@hit.edu.cn} (Corresponding author)}
}

\date{}

\maketitle

\begin{abstract}
This paper is concerned with the uniqueness in inverse acoustic scattering problems with the modulus of the far-field patterns co-produced by the obstacle (resp. medium) and the point sources. Based on the superposition of point sources as the incident waves, we overcome the difficulty of translation invariance induced by a single incident plane wave, and rigorously prove that
the location and shape of the obstacle as well as its boundary condition or the refractive index can be uniquely determined by the modulus of far-field patterns. This work is different from our previous work on phaseless inverse scattering problems [2018 {\it Inverse Problems} 34, 085002], in which the reference ball technique and the superposition of incident waves were used, and the phaseless far-field data generated only by the the scatterer were considered. In this paper, the phaseless far-field data co-produced by the scatterer and the point sources are used, thus the configuration is practically more feasible. Moreover, since the reference ball is not needed, the justification of uniqueness is much more clear and concise.
\end{abstract}

\noindent{\it Keywords}: inverse scattering, phaseless, Helmholtz equation, far field, point source


\section{Introduction}

The inverse scattering problems with the measurment of the far-field pattern is of significant importance in diverse areas of sciences and technology such as invasive testing, target identification, radar sensing, sonar detection and biomedical imaging (see, e.g., \cite{Colton}). However, in many realistic situations, only intensity/modulus information of the far-field pattern might be measured, which leads to the study of inverse scattering problems with phaseless or intensity-only far-field data. Owing to the lack of phase information, the mathematical justifications and numerical reconstructions of phaseless inverse scattering problems are more challenging than the phased case \cite{ACZ16}. In this study, we will deal with the uniqueness issue concerning the phaseless inverse acoustic scattering problems with incident point sources.

A well-known difficulty in phaseless inverse acoustic scattering with one plane wave as the incident field is the non-uniqueness. For example,  the location of the obstacle (resp. medium)
cannot be uniquely determined by the phaseless far-field data, which is due to the so-called translation invariance property, i.e., the modulus of the far field pattern is invariant under
translations \cite{Kli14, Kli17, KR17, Kress, Liu}. And even more notoriously, the invariance property of the phaseless far-field pattern cannot be remedied by using finitely many incident waves with different
wave numbers or different incident directions (see, e.g., \cite{Kress}). Consequently, some numerical algorithms for the shape reconstruction have been developed, such as the Newton method \cite{Kress}, the nonlinear integral equation method \cite{ GDM18, Ivanyshyn1, Ivanyshyn2, Ivanyshyn3}, the fundamental solution method \cite{KarageorghisAPNUM},
the hybrid method \cite{Lee2016}, the reverse time migration method \cite{CH17} and the direct imaging method \cite{ZZ18}. We also refer to \cite{BLL2013, Bao2016, KR16, Li1, LLW17, Shin} for the reconstruction of the shape of
a polyhedral obstacle, a convex sound-soft scatterer, a periodic grating profile and multi-scale sound-soft rough surfaces from phaseless far-field or near-field data. For a small sound-soft ball centered at the origin, the uniqueness on radius of the ball from a single phaseless far-field datum was established in \cite{LZ09}.

To break the translation invariance, the authors in \cite{ZhangBo20171} used the method of the superposition of two plane waves with different incident directions, and proposed a recursive Newton-type iteration algorithm. This method was also developed in \cite{ZhangBo20172} to recover locally rough surfaces with phaseless far-field data. By the method of superposition of two incident plane waves, uniqueness results were proved in \cite{XZZ18a} under a priori assumption.

The idea of resorting to the reference ball technique (see, e.g., \cite{Colton1, Colton2, Li}) in phaseless inverse scattering problems was proposed by Zhang and Guo in \cite{ZhangDeyue20181}. For a general setting of the underlying scatterer, the uniqueness results were established in \cite{ZhangDeyue20181} by utilizing the reference ball technique in conjunction with the superposition of a fixed plane wave and some point sources as the incident wave. With the aid of the reference ball technique, the a priori assumptions in \cite{XZZ18a} can be removed as well, see \cite{XZZ18b} for the details. In addition to the theoretical interests in phaseless inverse scattering, the strategy of adding reference objects or sources to the scattering system has also received an increasing attention in devising effective numerical inversion schemes. For instance, the reference ball based iterative methods have been developed in \cite{DZG19} to reconstruct both the location and shape of a scattering acoustic obstacle. A reference point scatterer based direct sampling method can be found in \cite{JLZ18a}. Similarly, the idea of the reference source was investigated in \cite{ZhangDeyue20182,JLZ18b} for solving some phaseless inverse source problems. We also refer to \cite{DLL18, JL18} for some recent studies on utilizing artificial reference objects in phaseless inverse elastic scattering problems.

It should be remarked that the measured phaseless far-field data in \cite{ZhangDeyue20181} are only generated by the the obstacle (resp. medium). However, in realistic applications, the far-field patterns are co-produced by the obstacle (resp. medium) and the point sources, and the corresponding phaseless data cannot be decoupled and measured separated. In this paper,
based on the superposition of point sources as the incident field and the measurements of the phaseless far-field patterns co-produced by the obstacle (resp. medium)
and the point sources, we rigorously prove that the location and shape of the obstacle as well as its boundary condition or the refractive index can be uniquely determined by
the modulus of far-field patterns. We would like to emphasize that the co-produced phaseless far-field patterns provide more information and overcome the indispensability of incorporating a reference ball in the previous works on establishing the uniqueness. 

In comparison with the previous reference ball technique in \cite{ZhangDeyue20181}, the treatment in this study exhibits the superiorities in the following aspects: First, this configuration enables us to provide a unified framework for the justification of uniqueness in both the cases of penetrable and impenetrable scatterers. Second, the proofs are significantly simplified since the essential requirment of the reference ball turns out to be completely unnecessary. Finally, from the practical point of view, to access the modulus of the far-field patterns concurrently produced by the underlying scatterer and the point sources should be relatively easier than capturing the modulus of far-field due solely to the scatterer. 

The rest of this paper is arranged as follows. In the next section, we present an introduction to the model problem. Section \ref{sec:obstacle} is devoted to the uniqueness results on phaseless inverse scattering problem.

\section{Problem setting}\label{sec:problem_setup}

We begin this section with the acoustic scattering problems for an incident plane wave. Assume $D \subset\mathbb{R}^3$ is an open and simply connected domain with
$C^2$ boundary $\partial D$.  Denote by $\nu$ be the unit outward normal to $\partial D$ and by $\mathbb{S}^2:=\{x\in\mathbb{R}^3: |x|=1\}$ the unit sphere in $\mathbb{R}^3$.
Let $u^i(x,d)=\mathrm{e}^{\mathrm{i} k x\cdot d}$ be a given incident plane wave, where $d\in\mathbb{S}^2$ and $k>0$ are the incident direction and wave number, respectively.
Then, the obstacle scattering problem can be formulated as: to find the total field $u=u^i+u^s$ which satisfies the following boundary value problem (see \cite{Colton}):
\begin{eqnarray}
\Delta u+ k^2 u= 0\quad \mathrm{in}\ \mathbb{R}^3\backslash\overline{D},\label{eq:Helmholtz} \\
\mathscr{B}u= 0 \quad \mathrm{on}\ \partial D, \label{eq:boundary_condition} \\
\lim\limits_{r=|x|\rightarrow\infty} r\bigg(\displaystyle\frac{\partial u^s}{\partial r} -\mathrm{i} ku^s\bigg)=0, \label{eq:Sommerfeld}
\end{eqnarray}
where $u^s$ denotes the scattered field and \eqref{eq:Sommerfeld} is the Sommerfeld radiation condition. Here $\mathscr{B}$ in \eqref{eq:boundary_condition} is the boundary operator
defined by
\begin{eqnarray}\label{BC}
\mathscr{B}u:=
\left\{
\begin{array}{lr}
u & \mathrm{on}\ \partial D_D, \\
\displaystyle\frac{\partial u}{\partial \nu}+ \lambda u, & \mathrm{on}\ \partial D_I,
\end{array}
\right.
\end{eqnarray}
where $\partial D_D\cup\partial D_I=\partial D, \partial D_D\cap\partial D_I=\emptyset, \lambda\in C(\partial D_I)$ and $\mathrm{Im}\lambda \geq 0$.
This mixed boundary condition \eqref{BC} covers the Dirichlet/sound-soft boundary condition ($\partial D_I=\emptyset$), the Neumann/sound-hard boundary condition ($\partial D_D=\emptyset$ and $\lambda=0$), and the impedance boundary condition ($\partial D_D=\emptyset$ and $\lambda\neq 0$).

The medium scattering problem is to find the total field $u=u^i+u^s$ that fulfills
\begin{eqnarray}
    \Delta u+ k^2n(x) u=  0 \quad \text{in}\ \mathbb{R}^3, \label{eq:Helmholtz_D}\\
    \lim\limits_{r=|x|\rightarrow\infty} r\bigg(\displaystyle\frac{\partial u^s}{\partial r}-\mathrm{i} ku^s\bigg)=0, \label{eq:Sommerfeld2}
\end{eqnarray}
where the refractive index $n(x)$ of the inhomogeneous medium is piecewise continuous such that $\mathrm{Re}(n)>0$, $\mathrm{Im}(n)\geq0$ and $1-n(x)$ is supported in $D$.

The direct scattering problems \eqref{eq:Helmholtz}--\eqref{eq:Sommerfeld} and \eqref{eq:Helmholtz_D}--\eqref{eq:Sommerfeld2} admit a unique solution
(see, e.g., \cite{Cakoni, Colton, McLean}), respectively, and the scattered wave $u^s$ has the following asymptotic behavior
$$
u^s(x,d)=\frac{\mathrm{e}^{\mathrm{i} k|x|}}{|x|}\left\{ u^{\infty}(\hat{x},d)+\mathcal{O}\left(\frac{1}{|x|}\right) \right\}, \quad |x|\rightarrow\infty
$$
uniformly in all observation directions $\hat{x}=x/|x|\in\mathbb{S}^2$. The analytic function $u^{\infty}(\hat{x},d)$ defined on the unit sphere $\mathbb{S}^2$ is called the far field
pattern or scattering amplitude (see \cite{Colton}). 

Now, the phaseless inverse scattering problem is to determine the location and shape $\partial D$ as well as the boundary condition $\mathscr{B}$
for the obstacle or the refractive index $n$ for the medium inclusion by the phaseless far field data $|u^{\infty}(\hat{x}, d)|$ for $\hat{x}, d\in \mathbb{S}^2$
and a fixed $k>0$. As we mentioned, there is no uniqueness on the phaseless inverse scattering problem due to the translation invariance, and
this ambiguity cannot be remedied by using finitely many incident waves with different wave numbers or different incident directions.

In the following, we will try to break the translation invariance by utilizing the superposition of incident point sources. To this end, we first introduce the following definition of admissible surfaces.
\begin{definition}[Admissible surface]
An open surface $\Gamma$ is called admissible if there exists a simply-connected domain $\Omega$ such that

\noindent (i) $\Omega\subset\mathbb{R}^3\backslash\overline{D}$ and $\partial \Omega$ is analytic homeomorphic to $\mathbb{S}^2$;

\noindent (ii) $k^2$ is not a Dirichlet eigenvalue of $-\Delta$ in $\Omega$;

\noindent (iii) $\Gamma\subset\partial\Omega$ is a two-dimensional analytic manifold with nonvanishing measure. 
\end{definition}

\begin{remark}
	We would like to point out that this requirement for the admissibility of $\Gamma$ is quite mild and thus can be easily fullfilled. For instance, $\Omega$ can be chosen as a ball whose radius is less than $\pi/k$ and $\Gamma$ is chosen as an arbitrary corresponding semisphere.
\end{remark}

For a generic point $z\in\mathbb{R}^3\backslash\overline{D}$, the incident field due to the point source located at $z$ is given by
\begin{equation*}
\Phi (x, z):=\frac{\mathrm{e}^{\mathrm{i} k|x-z|}}{4\pi |x-z|}, \quad x\in\mathbb{R}^3\backslash(\overline{D}\cup\{z\}),
\end{equation*}
which is also known as the fundamental solution to the Helmholtz equation. Denote by $v^{\infty}_D(\hat{x},z)$ the far-field patterns generated by $D$ corresponding to the incident field $\Phi(x, z)$. Define
$$
v^{\infty}(\hat{x},z):=v^{\infty}_D(\hat{x},z)+\Phi^\infty(\hat{x}, z),\quad \hat{x}\in\mathbb{S}^2,
$$
where $\Phi^\infty(\hat{x},z):=\mathrm{e}^{-\mathrm{i} k \hat{x}\cdot z}/(4\pi)$ is the the far-field pattern of $\Phi(x, z)$.
\begin{remark}
	As far as we know, the phaseless data $|v^{\infty}_D(\hat{x},z)+\Phi^\infty(\hat{x}, z)|$ is usually obtainable in realistic scenarios, whereas the phaseless data $|v^{\infty}_D(\hat{x},z)|$ cannot be measured directly. Hence, in our view, it would make more sense to use intensity-only data $|v^{\infty}(\hat{x},z)|=|v^{\infty}_D(\hat{x},z)+\Phi^\infty(\hat{x}, z)|$.
\end{remark}
For two generic and distinct source points $z_1, z_2\in\mathbb{R}^3\backslash\overline{D}$, we denote by
\begin{equation}\label{incident}
 v^i(x; z_1,z_2):=\Phi(x, z_1)+\Phi(x, z_2),\quad x\in\mathbb{R}^3\backslash(\overline{D}\cup\{z_1\}\cup\{z_2\}),
\end{equation}
the superposition of these point sources. Then, by the linearity of direct scattering problem, the far-field pattern co-produced by $D$ and the incident wave $v^i(x; z_1,z_2)$ is given by
$$
v^{\infty}(\hat{x};z_1,z_2):=v^{\infty}(\hat{x},z_1)+v^{\infty}(\hat{x},z_2), \quad \hat{x}\in \mathbb{S}^2.
$$

With these preparations,  we formulate the phaseless inverse scattering problems as the following.

\begin{problem}[Phaseless inverse scattering]\label{prob:obstacle}
Assume that $\Gamma$ is an admissible surface. Let $D$ be the impenetrable obstacle with boundary condition $\mathscr{B}$ or the inhomogeneous medium with refractive index $n$. Given the phaseless far field data
 \begin{equation*}
 \begin{array}{ll}
 & \{|v^{\infty}(\hat{x},z_0)|: \hat{x}\in \mathbb{S}^2\}, \\
 & \{|v^{\infty}(\hat{x},z)|:  \hat{x}\in \mathbb{S}^2,\ z\in \Gamma\}, \\
 & \{|v^{\infty}(\hat{x},z_0)+v^{\infty}(\hat{x},z)|: \hat{x}\in \mathbb{S}^2,\ z\in \Gamma\}
 \end{array}
 \end{equation*}
 for a fixed wavenumber $k>0$ and a fixed  $z_0\in\mathbb{R}^3\backslash(\overline{D}\cup\Gamma)$, determine the location and shape $\partial D$ as well as the boundary condition
 $\mathscr{B}$ for the obstacle or the refractive index $n$ for the medium inclusion.
\end{problem}

We refer to Figure \ref{fig:illustration} for an illustration of the geometry setting of Problem \ref{prob:obstacle}. The uniqueness of this problem will be analyzed in the next section.

\begin{figure}
    \centering
    \newdimen\R 
    \R=0.5cm
    \begin{tikzpicture}[thick]
    \pgfmathsetseed{3}
    \draw plot [smooth cycle, samples=7, domain={1:8},  xshift=0.7cm, yshift=-0.1cm] (\x*360/8+5*rnd:0.5cm+1cm*rnd) [fill=lightgray] node at (0.6,-0.2) {$D$};
    \draw [blue, dashed] (1.6, 3.4) circle (0.6cm); 
    \draw node at (1.6,3.5) {$\Omega$};
    \draw (1.05, 3.15) arc(205:345:0.6cm) [very thick, blue];
    \draw node at (2.3,2.9) {$\Gamma$};
    
    \fill [red] (-1.5,1.5) circle (2pt);  
    \draw (-1.5, 1.5) node [above] {$z_0$};
    \draw [->] (-1.4, 1.4)--(-0.8, 0.9);
    \draw (-1.6, 1.2) arc(280:360:0.5cm);
    \draw (-1.6, 1.0) arc(280:360:0.7cm);
    
    \fill [red] (1.6,2.8) circle (2pt);  
    \draw node at (1.7,3.0) {$z$};
    \draw [->] (1.58, 2.7)--(1.5, 2);
    \draw (1.2, 2.6) arc(225:315:0.5cm);
    \draw (1.1, 2.5) arc(220:320:0.6cm);
    
    \draw (-0.2, 2) node [above] {$v^i(\cdot; z_0, z)$}; 
    \draw [->] (2.5,1.1)--(3.4, 1.7);
    \draw (3.0, 1) arc(0:70:0.6cm);
    \draw (3.2, 1) arc(0:70:0.8cm);  
      
    \draw [dashed] (1, 1) circle (3.2cm) node at (5, 3.8) {$|v^\infty(\cdot; z_0, z)|, |x|\to \infty$}; 
    \end{tikzpicture}
    \caption{An illustration of the phaseless inverse scattering problem.} \label{fig:illustration}
\end{figure}
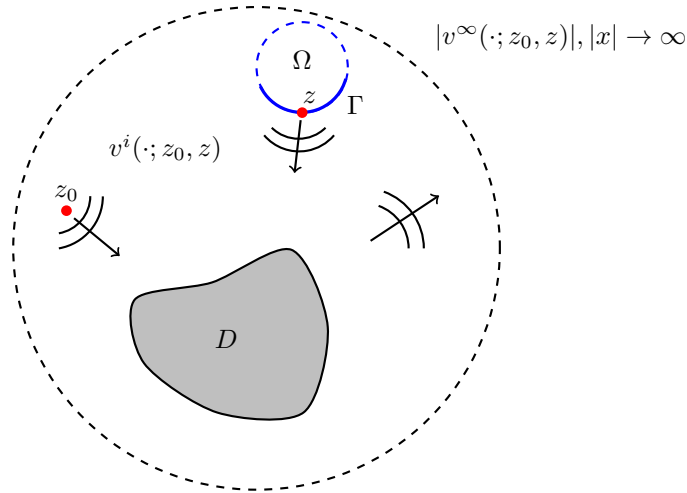

\section{Uniqueness for the phaseless inverse scattering}\label{sec:obstacle}

Now we present the uniqueness results on phaseless inverse scattering. The following theorem shows that Problem \ref{prob:obstacle} admits a unique solution, namely, the geometrical and physical information of the scatterer boundary or the refractive index for the medium can be simultaneously and uniquely determined from the modulus of far-field patterns.

\begin{theorem}\label{Thm1}
   For two scatterers $D_1$ and $D_2$, suppose that the corresponding far-field patterns satisfy that
   \begin{eqnarray}
   &&|v^{\infty}_1(\hat{x},z_0)|= |v^{\infty}_2(\hat{x},z_0)|, \quad \forall \hat{x}  \in \mathbb{S}^2, \label{obstacle_condition1} \\
   &&\ |v^{\infty}_1(\hat{x},z)|= |v^{\infty}_2(\hat{x},z)|, \quad \forall (\hat{x}, z) \in \mathbb{S}^2\times\Gamma \label{obstacle_condition2}
   \end{eqnarray}
   and
   \begin{eqnarray}\label{obstacle_condition3}
   |v^{\infty}_1(\hat{x},z_0)+v^{\infty}_1(\hat{x},z)|=|v^{\infty}_2(\hat{x},z_0)+v^{\infty}_2(\hat{x},z)|,\quad \forall (\hat{x}, z) \in \mathbb{S}^2\times\Gamma
   \end{eqnarray}
   for an admissible surface $\Gamma$ and an arbitrarily fixed $z_0\in\mathbb{R}^3\backslash(\overline{D}\cup\Gamma)$. Then we have
   
\noindent (i) If $D_1$ and $D_2 $ are two impenetrable obstacles with boundary conditions $\mathscr{B}_1$ and $\mathscr{B}_2$ respectively, then $D_1=D_2$ and $\mathscr{B}_1=\mathscr{B}_2$.

\noindent (ii) If $D_1$ and $D_2 $ are two medium inclusions with refractive indices $n_1$ and $n_2$ respectively, then $n_1=n_2$.
\end{theorem}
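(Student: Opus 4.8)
\section*{Proof proposal}

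The plan is to reduce the phaseless problem to the classical phased problem by performing a phase retrieval for the combined far-field pattern, and then invoke the known uniqueness theorems for phased data. The pivotal observation is that, by the mixed reciprocity relation, for each fixed $\hat{x}$ the datum $v^{\infty}(\hat{x},z)$ is (a constant multiple of) the \emph{total} field of the plane-wave problem evaluated at $z$, namely $v^{\infty}_j(\hat{x},z)=\tfrac{1}{4\pi}\,u_j(z,-\hat{x})$, where $u_j=u^i+u^s$. Consequently $z\mapsto v^{\infty}_j(\hat{x},z)$ is real-analytic and solves the Helmholtz equation $\Delta w+k^2 w=0$ in $\mathbb{R}^3\backslash\overline{D_j}$; moreover, because the incident plane wave dominates the decaying scattered field, it has the exterior asymptotics $v^{\infty}_j(\hat{x},z)\sim \tfrac{1}{4\pi}\mathrm{e}^{-\mathrm{i}k\hat{x}\cdot z}$ as $|z|\to\infty$, \emph{independently of} $j$. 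This reciprocity reduction is what makes the treatment of the impenetrable and penetrable cases simultaneous, since in $\Omega\subset\mathbb{R}^3\backslash\overline{D}$ the total field obeys the free Helmholtz equation in both settings.

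First I would fix $\hat{x}$ and extract the phase information from \eqref{obstacle_condition1}--\eqref{obstacle_condition3} via the polarization identity $|a+b|^2=|a|^2+|b|^2+2\,\mathrm{Re}(\overline{a}\,b)$. Subtracting the moduli supplied by \eqref{obstacle_condition1} and \eqref{obstacle_condition2} from the expansion of \eqref{obstacle_condition3} yields
\[
\mathrm{Re}\big(\overline{v^{\infty}_1(\hat{x},z_0)}\,v^{\infty}_1(\hat{x},z)\big)=\mathrm{Re}\big(\overline{v^{\infty}_2(\hat{x},z_0)}\,v^{\infty}_2(\hat{x},z)\big),\qquad z\in\Gamma.
\]
Introducing the analytic functions $h_j(z):=\overline{v^{\infty}_j(\hat{x},z_0)}\,v^{\infty}_j(\hat{x},z)$ (constant multiples of the above total field, hence again Helmholtz solutions in $z$), the identity above reads $\mathrm{Re}(h_1-h_2)=0$ on $\Gamma$, while \eqref{obstacle_condition1}--\eqref{obstacle_condition2} give $|h_1|=|h_2|$ on $\Gamma$. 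These combine to $\mathrm{Im}(h_1-h_2)\,\mathrm{Im}(h_1+h_2)=0$ on $\Gamma$. Since $\Gamma$ is a connected two-dimensional analytic manifold of nonvanishing measure and the two factors are real-analytic on it, a Baire-category/analyticity argument forces one factor to vanish identically: either $h_1\equiv h_2$ (equivalently $v^{\infty}_1(\hat{x},\cdot)=\lambda\,v^{\infty}_2(\hat{x},\cdot)$ on $\Gamma$ with a unimodular constant $\lambda=\lambda(\hat{x})$) or $h_1\equiv\overline{h_2}$ on $\Gamma$.

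Next I would propagate each alternative off $\Gamma$ and eliminate the spurious one using the admissibility hypotheses. Since $v^{\infty}_j(\hat{x},\cdot)$ is real-analytic on the analytic manifold $\partial\Omega$ and $\Gamma$ contains a relatively open subset of it, the relation extends from $\Gamma$ to all of $\partial\Omega$. In the first alternative, $w:=v^{\infty}_1(\hat{x},\cdot)-\lambda\,v^{\infty}_2(\hat{x},\cdot)$ solves the Helmholtz equation in $\Omega$ and vanishes on $\partial\Omega$; as $k^2$ is not a Dirichlet eigenvalue of $-\Delta$ in $\Omega$, it follows that $w\equiv0$ in $\Omega$, and unique continuation extends this to the whole connected exterior. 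Letting $|z|\to\infty$ and using the $j$-independent incident asymptotics forces $\lambda=1$, so $v^{\infty}_1(\hat{x},\cdot)\equiv v^{\infty}_2(\hat{x},\cdot)$. In the second alternative the same continuation gives $\overline{v^{\infty}_1(\hat{x},z_0)}\,v^{\infty}_1(\hat{x},z)=v^{\infty}_2(\hat{x},z_0)\,\overline{v^{\infty}_2(\hat{x},z)}$ throughout the exterior; the asymptotics then equate $\mathrm{e}^{-\mathrm{i}k\hat{x}\cdot z}$ with $\mathrm{e}^{+\mathrm{i}k\hat{x}\cdot z}$, which is impossible unless $v^{\infty}_j(\hat{x},z_0)=0$. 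Since $v^{\infty}_j(\cdot,z_0)$ is analytic and not identically zero, this exceptional set of $\hat{x}$ has measure zero, so for almost every $\hat{x}$ the first alternative holds, and by analyticity in $\hat{x}$ we obtain $v^{\infty}_1\equiv v^{\infty}_2$ on $\mathbb{S}^2\times(\mathbb{R}^3\backslash\overline{D})$.

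With the phases recovered, cancelling the common $\Phi^\infty$ gives $v^{\infty}_{D_1}=v^{\infty}_{D_2}$, and reciprocity converts this into equality of the plane-wave far-field patterns $u^{\infty}_1(\hat{x},d)=u^{\infty}_2(\hat{x},d)$ for all $\hat{x},d\in\mathbb{S}^2$. I would then conclude by the classical phased uniqueness results: for case (i), equality of far fields yields $D_1=D_2$ and subsequently $\mathscr{B}_1=\mathscr{B}_2$, and for case (ii) it yields $n_1=n_2$. I expect the genuine difficulty to lie in the phase-retrieval step, specifically in ruling out the conjugate branch $h_1\equiv\overline{h_2}$; the clean resolution hinges on exploiting the dominant incident plane wave in the large-$|z|$ asymptotics together with the non-eigenvalue condition, which is precisely what replaces the reference-ball device of the earlier work.
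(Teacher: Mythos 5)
Your proposal is correct and follows essentially the same route as the paper: the polarization identity yields the dichotomy between the identical and conjugate phase relations, the mixed reciprocity relation converts the far-field data into total fields $u_j(z,-\hat{x})$ of plane-wave problems, the non-eigenvalue condition on $\Omega$ together with analytic/unique continuation propagates the relation to the whole exterior, and the large-$|z|$ asymptotics fix the unimodular factor to be $1$ and rule out the conjugate branch before the classical phased uniqueness theorems are invoked. The only notable (and welcome) difference is that your factorization $\mathrm{Im}(h_1-h_2)\,\mathrm{Im}(h_1+h_2)=0$ combined with the Baire/analyticity argument treats more carefully the step where the paper simply asserts that one of the two branches of the cosine identity holds uniformly on all of $S\times\Gamma$.
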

\begin{proof}
    From \eqref{obstacle_condition1}, \eqref{obstacle_condition2} and \eqref{obstacle_condition3}, we have for all $\hat{x}\in\mathbb{S}^2, z\in\Gamma$
    \begin{equation}\label{Thm1equality1}
    \mathrm{Re}\left\{v^{\infty}_1(\hat{x},z_0) \overline{v^{\infty}_1(\hat{x},z)}\right\}
    =\mathrm{Re}\left\{v^{\infty}_2(\hat{x},z_0) \overline{v^{\infty}_2(\hat{x},z)}\right\},
    \end{equation}
    where the overline denotes the complex conjugate. According to \eqref{obstacle_condition1} and \eqref{obstacle_condition2}, we denote
    \begin{equation*}
        v^{\infty}_j(\hat{x},z_0)=r(\hat{x},z_0) \mathrm{e}^{\mathrm{i} \alpha_j(\hat{x},z_0)},\quad
        v^{\infty}_j(\hat{x},z)=s(\hat{x},z) \mathrm{e}^{\mathrm{i} \beta_j(\hat{x},z)},\quad j=1,2,
    \end{equation*}
    where $r(\hat{x},z_0)=|v^{\infty}_j(\hat{x},z_0)|, s(\hat{x},z)=|v^{\infty}_j(\hat{x},z)|$,
    $\alpha_j(\hat{x},z_0)$ and $\beta_j(\hat{x},z)$, are real-valued functions,  $j=1,2$.

    From $s(\hat{x},z)\not\equiv 0$ for $\hat{x} \in \mathbb{S}^2, z \in\Gamma$, and its continuity, one knows that there exists an open set
    $\tilde{S}\subset \mathbb{S}^2$ such that $s(\hat{x},z)\neq 0$, $\forall (\hat{x},z)\in \tilde{S}\times\Gamma$.
    Since $v^{\infty}_j(\hat{x},z_0)$ are analytic with respect to $\hat{x} \in \mathbb{S}^2$,  $v^{\infty}_j(\hat{x},z_0)\not\equiv 0$ for $\hat{x} \in \mathbb{S}^2$, and $z_0$ is fixed,
    we have $r(\hat{x},z_0)\not\equiv 0$ on $\tilde{S}$. Again, the continuity leads to $r(\hat{x},z_0)\neq 0$ on an open set $S \subset \tilde{S}$. Therefore, we have $r(\hat{x},z_0)\neq 0$,
    $s(\hat{x},z)\neq 0,\ \forall (\hat{x}, z) \in S\times\Gamma$. In addition, taking \eqref{Thm1equality1} into account, we derive that
    \begin{equation*}
        \cos[\alpha_1(\hat{x},z_0)-\beta_{1}(\hat{x},z)]=\cos[\alpha_2(\hat{x},z_0)-\beta_{2}(\hat{x},z)], \quad \forall (\hat{x}, z) \in S\times\Gamma.
    \end{equation*}
    Hence, either
    \begin{eqnarray}\label{Thm1equality2}
    \alpha_1(\hat{x},z_0)-\alpha_2(\hat{x},z_0)=\beta_{1}(\hat{x},z)-\beta_{2}(\hat{x},z)+ 2m\pi, \quad \forall (\hat{x}, z) \in S\times\Gamma
    \end{eqnarray}
    or
    \begin{eqnarray}\label{Thm1equality3}
    \alpha_1(\hat{x},z_0)+\alpha_2(\hat{x},z_0)=\beta_1(\hat{x},z)+\beta_{2}(\hat{x},z)+ 2m\pi, \quad \forall (\hat{x}, z) \in S\times\Gamma
    \end{eqnarray}
    holds with some $m \in \mathbb{Z}$.

    First, we shall consider the case \eqref{Thm1equality2}. Since $z_0$ is fixed, let us define $\gamma(\hat{x}):=\alpha_1(\hat{x},z_0)-\alpha_2(\hat{x},z_0)- 2m\pi$
    for $ \hat{x} \in S$, and then, we deduce for all $ (\hat{x}, z) \in S\times\Gamma$
    \begin{equation*}
        v^{\infty}_1(\hat{x},z)=s(\hat{x},z)\mathrm{e}^{\mathrm{i} \beta_{1}(\hat{x},z)}
        =s(\hat{x},z)\mathrm{e}^{\mathrm{i} \beta_{2}(\hat{x},z)+\mathrm{i} \gamma(\hat{x})}=v^{\infty}_2(\hat{x},z)\mathrm{e}^{\mathrm{i} \gamma(\hat{x})}.
    \end{equation*}
    From the mixed reciprocity relation \cite[Theorem 3.16]{Colton} for the obstacle or \cite[Theorem 2.2.4]{Pot01} for the inhomogeneous medium, we have
    \begin{equation*}
        4\pi v^{\infty}_{D_j}(\hat{x},z)= u^s_j(z,-\hat{x}), \quad j=1,2,\quad \forall (\hat{x}, z) \in S\times\Gamma.
    \end{equation*}
where $u^s_j(z,-\hat{x})$ denotes the scattered field induced by $D_j$ and the incident plane wave with impinging direction $-\hat{x}$. Thus,
    \begin{equation*}
        u^{s}_1(z,-\hat{x})+u^i(z,-\hat{x})
        =\mathrm{e}^{\mathrm{i} \gamma(\hat{x})}\left[ u^{s}_2(z,-\hat{x})+u^i(z,-\hat{x})\right], \quad \forall (\hat{x}, z) \in S\times\Gamma.
    \end{equation*}
Let $u_j=u^s_j+u^i(j=1,2)$ be the total field. Then, for every $-d\in S$, it holds that $u_1(z,d)=\mathrm{e}^{\mathrm{i} \gamma(-d)} u_2(z,d)$ for $z\in\Gamma$. By using the analyticity of
$u_j(z,d)$($j=1,2$), we have $u_1(z,d)=\mathrm{e}^{\mathrm{i} \gamma(-d)} u_2(z,d)$ for $z\in \partial\Omega$.
Let $w(x,d)=u_1(x,d)-\mathrm{e}^{\mathrm{i} \gamma(-d)}  u_2(x,d)$, then
\begin{equation*}
\left\{
\begin{array}{lr}
\Delta w+ k^2 w=0 &\mathrm{in}\ \Omega, \\
w=0  &\mathrm{on}\ \partial \Omega.
\end{array}
\right.
\end{equation*}
By the assumption of $\Omega$ that $k^2$ is not a Dirichlet eigenvalue of $-\Delta$ in $\Omega$,  we find $w=0$ in $\Omega$. Now, the analyticity of
$u_j(x,d)(j=1,2)$ with respect to $x$ yields
    \begin{equation*}
        u_1(x,d)=\mathrm{e}^{\mathrm{i} \gamma(-d)}  u_2(x,d),\quad \forall x \in \mathbb{R}^3\backslash (\overline{D}_1\cup \overline{D}_2).
    \end{equation*}
i.e.,
\begin{equation}\label{eq:relation}
        u^s_1(x,d)+u^i(x,d)=\mathrm{e}^{\mathrm{i} \gamma(-d)} \left[ u^s_2(x,d)+u^i(x,d)\right],\quad \forall x \in \mathbb{R}^3\backslash (\overline{D}_1\cup \overline{D}_2).
\end{equation}

By the Green's formula \cite[Theorem 2.5]{Colton}, one can readily deduce that
$$
\lim_{|x|\to \infty} u_j^s(|x|\hat x,d)=0,\quad j=1,2,
$$
holds uniformly in all directions $\hat x=x/|x|$. Hence, by letting $|x|\rightarrow \infty$ in \eqref{eq:relation}, we obtain 
$$
0=\lim_{|x|\to \infty}[u^s_1(x, d)-\mathrm{e}^{\mathrm{i} \gamma(-d)}u^s_2(x, d)]=\lim_{|x|\to \infty}(\mathrm{e}^{\mathrm{i} \gamma(-d)}-1)\mathrm{e}^{\mathrm{i}kx\cdot d}.
$$
uniformly in all directions $\hat x=x/|x|$. Therefore, $\mathrm{e}^{\mathrm{i} \gamma(-d)} \equiv1$, and
\begin{equation*}
    u^{\infty}_1(\hat{x},d)= u^{\infty}_2(\hat{x},d), \quad \forall (\hat{x},-d)\in \mathbb{S}^2\times S.
\end{equation*}
    Further, the reciprocity relation and the analyticity of $u^{\infty}_j(-d,-\hat{x})$($j=1,2$) with respect to $d \in \mathbb{S}^2$ implies that the far field patterns coincide, i.e.,
    \begin{eqnarray}\label{coincide}
    u^{\infty}_1(\hat{x},d)= u^{\infty}_2(\hat{x},d),
    \quad \forall \hat{x}, d \in \mathbb{S}^2.
    \end{eqnarray}

Next we are going to show that the case \eqref{Thm1equality3} does not hold. Suppose that \eqref{Thm1equality3} is true, then following a similar argument, we see that there exists $\eta(-d)$ such that $u_1(x,d)=\mathrm{e}^{\mathrm{i} \eta(-d)} \overline{u_2(x,d)}$ for  $-d  \in S$ and $x \in \mathbb{R}^3\backslash (\overline{D}_1\cup\overline{D}_2)$. Again, taking $|x|\rightarrow \infty$, we have
\begin{equation*}
         \lim\limits_{|x|\rightarrow \infty}\mathrm{e}^{2\mathrm{i} k x\cdot d}=\mathrm{e}^{\mathrm{i} \eta(-d)},
\end{equation*}
which is a contradiction. Therefore, the case \eqref{Thm1equality3} does not hold.

Having verified \eqref{coincide}, we shall complete our proof as the consequences of two existing uniqueness results. For the inverse obstacle scattering, by Theorem 5.6 in \cite{Colton}, we have $D_1=D_2$ and $\mathscr{B}_1=\mathscr{B}_2$, and for
inverse medium scattering, Theorem 10.5 in \cite{Colton} leads to  $n_1=n_2$.
\end{proof}

\begin{remark}
    We would like to point out that an analogous uniqueness result in two dimensions remains valid after appropriate modifications of the fundamental solution, the radiation condition and the admissible surface. So we omit the 2D details.
\end{remark}

\begin{remark}
   We would like to remark that a similar result on uniqueness can be also obtained by using the superposition of a fixed plane wave and some point sources as the incident fields.
\end{remark}

\section*{Acknowledgements}

F. Sun and D. Zhang were supported by NSFC grant 11671170 and Y. Guo was supported by NSFC grants 11601107, 41474102 and 11671111.


\end{document}